\documentclass[11pt,a4paper]{amsart}
\usepackage{amsmath,amssymb,amsfonts, float}
\usepackage[all]{xy}
\usepackage{caption}
\usepackage{enumerate}
\usepackage{mathpazo}
\usepackage{a4wide}

\usepackage{bm}
\usepackage{graphicx}
\usepackage[breaklinks=true]{hyperref}

\usepackage{xcolor}
\usepackage{multicol}
\usepackage{tabularx}

\usepackage{hyperref}
\usepackage[capitalize]{cleveref}

\usepackage[normalem]{ulem}

\setlength{\textheight}{9.5in}

\pagestyle{plain}


\newtheorem{thm}{Theorem}[section] 
\newtheorem*{thm*}{Theorem} 
\newtheorem{prop}[thm]{Proposition}
\newtheorem{lem}[thm]{Lemma}

\theoremstyle{definition}

\newtheorem{question}[thm]{Question}
\newtheorem{rem}[thm]{Remark}

\DeclareMathOperator{\F}{\mathbb{F}}

\newcommand{\Rad}{{\rm Rad}}
\newcommand{\s}{\text{ss}}

    \DeclareFontFamily{U}{wncy}{}
    \DeclareFontShape{U}{wncy}{m}{n}{<->wncyr10}{}
    \DeclareSymbolFont{mcy}{U}{wncy}{m}{n}
    \DeclareMathSymbol{\Sha}{\mathord}{mcy}{"58}

\numberwithin{equation}{section}

\DeclareSymbolFont{bbold}{U}{bbold}{m}{n}
\DeclareSymbolFontAlphabet{\mathbbold}{bbold}



\begin{document}
\title{A complete classification of \\ perfect unitary Cayley graphs}
 \author{ J\'an Min\'a\v{c}, Tung T. Nguyen, Nguy$\tilde{\text{\^{e}}}$n Duy T\^{a}n }
\address{Department of Mathematics, Western University, London, Ontario, Canada N6A 5B7}
\email{minac@uwo.ca}
\date{\today}

 \address{Department of Kinesiology, Western University, London, Ontario, Canada N6A 5B7}
 \email{tung.nguyen@uwo.ca}
 
  \address{
Faculty of  Mathematics and 	Informatics, Hanoi University of Science and Technology, 1 Dai Co Viet Road, Hanoi, Vietnam } 
\email{tan.nguyenduy@hust.edu.vn}

\thanks{JM is partially supported by the Natural Sciences and Engineering Research Council of Canada (NSERC) grant R0370A01. He gratefully acknowledges the Western University Faculty of Science Distinguished Professorship 2020-2021. TTN is partially supported by an AMS-Simons Travel Grant. NDT is partially supported by the Vietnam National
Foundation for Science and Technology Development (NAFOSTED) under grant number 101.04-2023.21}
\keywords{Cayley graphs, Function fields, Ramanujan sums, Symmetric algebras, Prime graphs, Perfect graphs.}
\subjclass[2020]{Primary 05C25, 05C50, 05C51}
\maketitle

\begin{abstract}
Due to their elegant and simple nature, unitary Cayley graphs have been an active research topic in the literature. These graphs are naturally connected to several branches of mathematics, including number theory, finite algebra, representation theory, and graph theory. In this article, we study the perfectness property of these graphs. More precisely, we provide a complete classification of perfect unitary Cayley graphs associated with finite rings.

\end{abstract}
\maketitle

\section{Introduction}
Let $R$ be a finite unital associative ring. The unitary Cayley graph on $R$ is defined as the Cayley graph $G_R = \mathrm{Cay}(R, R^{\times})$, where $R^{\times}$ denotes the group of all invertible elements in $R$. Specifically, $G_R$ is a graph characterized by the following:
\begin{enumerate}
\item The vertex set of $G_R$ is $R$,
\item Two vertices $a, b \in V(G_R)$ are adjacent if and only if $a - b \in R^{\times}$.
\end{enumerate}
Various cases of unitary Cayley graphs have been investigated in the literature. To the best of our knowledge, \cite{klotz2007some} is perhaps the first work that formally introduces the concept of a unitary Cayley graph. In this work, the authors discover some fundamental properties of the unitary graph when $R$ is the ring of integers modulo a given positive integer $n$, such as their spectra, clique, and independence numbers, planarity, perfectness, and much more. \cite{unitary} generalizes many results from \cite{klotz2007some} to the case where $R$ is an arbitrary finite commutative ring. In particular, they are able to classify all perfect unitary Cayley graphs when $R$ is commutative (see \cite[Theorem 9.5]{unitary}). The work \cite{kiani2012unitary} extends this line of research to rings that are not necessarily commutative.

Our interest in unitary Cayley graphs stems from the joint work \cite{chudnovsky2024prime}, where we classify all prime unitary Cayley graphs amongst other things (see \cite[Theorem 4.34]{chudnovsky2024prime}). During our discussions, Sophie Spirkl posed the following question:

\begin{question} \label{question:main}
    Can we classify all perfect unitary Cayley graphs? 
\end{question}
We recall that a graph $G$ is said to be perfect if, for every induced subgraph $H$ of $G$, the chromatic number of $H$ equals the size of its maximum clique. The strong perfect graph theorem gives concrete criteria for a graph to be perfect; namely, a graph is perfect if and only if neither the graph itself nor its graph complement contains an induced odd cycle of length of at least five (see \cite{chudnovsky2006strong}).

The goal of this article is to provide a complete answer to \cref{question:main}. For the precise statement, we refer readers to \cref{thm:main}. We remark that our approach combines experimental and theoretical mathematics. Specifically, we use SageMath to generate unitary Cayley graphs and then utilize algorithms from the Python library NetworkX to identify relevant cycles within these graphs and check whether two given graphs are isomorphic. As it turns out, when they exist these cycles exhibit several common features, enabling us to find a pattern that is applicable to general cases (of course, we need to show that the pattern that we found \textit{actually} works! For that, we need pure mathematics.)

\subsection{Code}
The code that we developed to generate unitary graphs over a finite ring and do experiments on them can be found at \cite{Nguyen_unitary_graph}. 

\section{Reduction to the semisimple case}  \label{sec:reduction}
In this section, we use some structure theorem for finite rings to reduce \cref{question:main} to the case where $R$ is a semisimple ring. We first start with the following observation. While we believe that this observation is well-known among the experts, we cannot find a reference for it. For the sake of completeness, we provide our proof here. 

\begin{prop} \label{prop:left-right}
Let $R$ be a finite ring and $r \in R$. Then the following conditions are equivalent. 
\begin{enumerate}
    \item $r$ is invertible.
    \item $r$ is left invertible. 
    \item $r$ is right invertible. 
\end{enumerate}
\begin{proof}
By definition $(1)$ implies $(2)$ and $(3)$. Additionally $(2)+(3)$ implies $(1)$.  Therefore, it is sufficient to show that $(2)$ and $(3)$ are equivalent. We will show that $(2)$ implies $(3).$ The statement that $(3)$ implies $(2)$ can be proved using the same argument. 

Now, suppose that $(2)$ holds. Let $r \in R$ such that $r$ is left-invertible; i.e., there exists $s \in R$ such that $sr=1.$ Let us consider the multiplication by $r$ map $m_r\colon R \to R$ defined by $m_r(a)=ra.$ This is a an injective group homomorphism on $(R,+)$.  Indeed, let $a \in \ker(m_r)$. Then $0=s(ra)=(sr)a = a$ and hence $\ker(m_r)= 0$. Because $R$ is a finite set, we conclude that $m_r$ is also subjective. In particular, we can find $s'$ such that $rs'=1.$ This shows that $r$ is right invertible and therefore invertible. 
\end{proof}
\end{prop}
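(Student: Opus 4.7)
The plan is to reduce the three-way equivalence to showing that left invertibility implies right invertibility in a finite ring; the symmetric implication then follows by the same argument with left and right interchanged, and the combination of left plus right invertibility yields two-sided invertibility by the standard fact that if $sr = 1 = rt$ then $s = s(rt) = (sr)t = t$.

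Concretely, suppose $r$ is left invertible, so $sr = 1$ for some $s \in R$. I would introduce the left-multiplication map $m_r \colon R \to R$, $a \mapsto ra$. This is a homomorphism of the additive group $(R,+)$. The key computation is that $\ker(m_r) = 0$: if $ra = 0$, then $a = (sr)a = s(ra) = 0$, so $m_r$ is injective. Because $R$ is finite, an injective endomorphism of $(R,+)$ is automatically surjective (pigeonhole on a finite set), so there exists $s' \in R$ with $rs' = m_r(s') = 1$, giving right invertibility.

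The reverse implication, that right invertibility implies left invertibility, is formally identical after swapping sides: from $rt = 1$ one shows that the right-multiplication map $a \mapsto ar$ is injective (since $ar = 0$ implies $a = a(rt) = (ar)t = 0$), hence surjective by finiteness, producing $t'$ with $t'r = 1$. There is no genuine obstacle in this argument — the only substantive ingredient is the finiteness of $R$, which is what powers the injective-implies-surjective step; everything else is formal manipulation with the associative law. In a general (non-finite) ring this equivalence fails, so the proof must use finiteness precisely once, and that is the point where it enters.
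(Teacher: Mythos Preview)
Your proof is correct and matches the paper's approach essentially line for line: both reduce to showing that left invertibility implies right invertibility via the injectivity of the left-multiplication map $a\mapsto ra$ (using $sr=1$ to kill the kernel) together with the pigeonhole/finiteness step to obtain surjectivity. The only cosmetic addition in your write-up is the explicit verification that $sr=1=rt$ forces $s=t$, which the paper leaves implicit in its ``$(2)+(3)$ implies $(1)$'' remark.
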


\begin{rem}
By \cref{prop:left-right}, there will be no ambiguity when we write $R^{\times}.$
\end{rem}
We recall that the Jacobson radical $\Rad(R)$ of $R$ is the intersection of all left maximal ideals in $R$ (see \cite[Chapter 4.3]{pierce1982associative}). It turns out that $\Rad(R)$ is a two-sided ideal in $R.$ By \cite[Proposition 4.30]{chudnovsky2024prime}, we know that $\Rad(R)$ is a homogenous set in the unitary Cayley graph $G_R.$ Additionally, by for \cite[Corollary 4.2]{chudnovsky2024prime}, we know that 
\[ G_R \cong G_{R^{\s}} * E_n, \]
here $R^{\s} = R/\Rad(R)$ is the simplification of $R$, $E_n$ is the complete graph on $n = |\Rad(R)|$ vertices, and $*$ denotes the wreath product of two graphs (see \cite[Definition 2.5]{chudnovsky2024prime} for the definition of the wreath product of graphs).

\begin{rem}
    Technically speaking, \cite{chudnovsky2024prime} only deals with commutative rings. However, the arguments for \cite[Theorem 4.30, Corollary 4.2]{chudnovsky2024prime} can be applied directly to all finite rings. 
 \end{rem}

By the strong graph theorem, we can see that $G_1 * G_2$ is perfect if and only if each $G_1$ and $G_2$ is. Since the empty graph $E_n$ is perfect, we have the following immediate consequence. 
\begin{prop}
    $G_{R}$ is perfect if and only if $G_{R^{\s}}$ is perfect. 
\end{prop}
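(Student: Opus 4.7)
The proof is essentially immediate from the structural facts recalled just above the statement, so the plan is short. First, I would invoke the isomorphism $G_R \cong G_{R^{\s}} * E_n$ from \cite[Corollary 4.2]{chudnovsky2024prime}, where $n = |\Rad(R)|$ and $E_n$ is the edgeless graph on $n$ vertices. This is consistent with the observation that if $a - b \in \Rad(R)$, then $a - b$ is a non-unit, so $a$ and $b$ are not adjacent in $G_R$; thus each coset of $\Rad(R)$ forms an independent homogeneous set, exactly as required for a wreath product with an edgeless fiber.

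Next, I would apply the principle asserted in the sentence just before the proposition: $G_1 * G_2$ is perfect if and only if both $G_1$ and $G_2$ are perfect. One direction is trivial because each of $G_1$ and $G_2$ sits inside $G_1 * G_2$ as an induced subgraph (take one vertex from each fiber to get $G_1$; take a single fiber to get $G_2$), and induced subgraphs of perfect graphs are perfect. The nontrivial direction follows from the strong perfect graph theorem \cite{chudnovsky2006strong}: any induced odd hole or odd antihole of length at least five in $G_1 * G_2$ either lies inside a single fiber, in which case it produces such a subgraph in $G_2$, or meets at least two fibers and then projects to one in $G_1$, using that vertices in a common fiber are pairwise non-adjacent while fibers behave uniformly with respect to inter-fiber edges.

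Finally, since the edgeless graph $E_n$ is trivially perfect, specializing the equivalence to $G_1 = G_{R^{\s}}$ and $G_2 = E_n$ gives exactly the conclusion: $G_R$ is perfect if and only if $G_{R^{\s}}$ is perfect. I do not anticipate any genuine obstacle here; the only point that could merit a line of justification is the wreath-product perfectness criterion, but its verification via the strong perfect graph theorem is routine and has already been signposted in the surrounding text.
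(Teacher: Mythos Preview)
Your proposal is correct and matches the paper's approach exactly: the paper derives the proposition as an immediate consequence of the isomorphism $G_R \cong G_{R^{\s}} * E_n$, the fact (attributed to the strong perfect graph theorem) that $G_1 * G_2$ is perfect if and only if both factors are, and the perfectness of $E_n$. You have simply added a line of justification for the wreath-product perfectness criterion that the paper leaves as an assertion.
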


Therefore, from now on, we can assume that $R=R^{\s}$; i.e., $R$ is semisimple. By the Artin-Wedderburn structure theorem, we know that 
\[ R = \prod_{i=1}^s R_i \times \prod_{i=1}^r M_{d_i}(F_i).\]
Here $R_i$ is a finite field such that $2 \leq |R_1| \leq |R_2|\leq \cdots \leq |R_s|$. Additionally, $d_i \geq 2$, and $F_i$ is a finite field. We can then see that $G_R$ is a direct product of the unitary 
\[ G_R = \prod_{i=1}^s G_{R_i} \times \prod_{i=1}^r G_{M_{d_i}(\F_i)} = \prod_{i=1}^s K_{|R_i|} \times \prod_{i=1}^r G_{M_{d_i}(\F_i)}.\]

The case where $R$ is commutative is treated in \cite{unitary}. To break down the problem, we will deal with one factor at a time. For these reasons, we will first consider the case where $R$ is a matrix ring over a finite field in the next section. 

\section{$R$ is a matrix ring.}
Let $d \geq 2$ and $F$ a finite field. Let $M_d(F)$ be the ring of $d \times d$ matrices with coefficients in $F.$ Let $G_{M_d(F)}$ the unitary Cayley graph on $M_d(F)$.  We study the following question.
\begin{question}
When is $G_{M_d(F)}$ a perfect graph?  
\end{question}
We remark that there is a diagonal embedding $F^d \to M_d(F).$ Consequently, $G_{F^d}$ is naturally an induced subgraph of $G_{M_d(F)}.$ Consequently, if $G_{M_d(F)}$ is perfect, then so is $G_{F^d}.$ By \cite[Theorem 9.5]{unitary}, we conclude that either $F=\F_2$ or $d=2.$ In summary, we have just proved the following.
\begin{prop}
    If $M_d(F)$ is perfect, then either $d=2$ or $F=\F_2.$
\end{prop}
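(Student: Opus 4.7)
The plan is to exhibit a natural induced subgraph of $G_{M_d(F)}$ whose perfectness is already understood from the commutative classification, and then derive the obstruction from it. The obvious candidate is the diagonal subring.

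First, I would consider the unital ring embedding $\iota \colon F^d \hookrightarrow M_d(F)$ sending $(a_1,\dots,a_d)$ to $\mathrm{diag}(a_1,\dots,a_d)$, and verify that the image $\iota(F^d)$ spans an induced subgraph of $G_{M_d(F)}$ isomorphic to $G_{F^d}$. The only thing to check is that adjacency is preserved and reflected: for two diagonal matrices $A=\mathrm{diag}(a_i)$ and $B=\mathrm{diag}(b_i)$, their difference $A-B=\mathrm{diag}(a_i-b_i)$ lies in $M_d(F)^\times$ iff every $a_i-b_i$ is nonzero in $F$, which is exactly the condition that $(a_i-b_i)_i \in (F^d)^\times$. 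Hence $\iota$ induces a graph embedding $G_{F^d}\hookrightarrow G_{M_d(F)}$ as an induced subgraph.

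Next, I would invoke the basic fact that any induced subgraph of a perfect graph is perfect (this is immediate from the definition). So if $G_{M_d(F)}$ is perfect, then so is $G_{F^d}$. Since $F^d$ is a finite commutative ring, the classification already established in \cite[Theorem 9.5]{unitary} applies: perfectness of $G_{F^d}$ forces either $F=\F_2$ or $d\le 2$. Combining this with the standing assumption $d\ge 2$, the conclusion $d=2$ or $F=\F_2$ follows.

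There is no real obstacle here; the proof is essentially a transfer argument from the matrix setting to the diagonal subring, where the commutative classification already does the work. The only mildly delicate point worth stating explicitly is that diagonal matrices are invertible over $F$ precisely when their diagonal entries are all nonzero, which is what lets the induced-subgraph identification go through cleanly.
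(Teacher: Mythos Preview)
Your proposal is correct and follows essentially the same approach as the paper: embed $F^d$ diagonally into $M_d(F)$, observe that $G_{F^d}$ sits inside $G_{M_d(F)}$ as an induced subgraph, and then invoke the commutative classification \cite[Theorem~9.5]{unitary} to force $F=\F_2$ or $d\le 2$. Your version is slightly more explicit in verifying that invertibility of diagonal matrices matches invertibility in $F^d$, but there is no substantive difference.
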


We remark that when $F=\F_2$, the graph $G_{F^d}$ is bipartite and hence perfect. One can naturally ask whether $G_{M_d(F)}$ is bipartite. The answer is no. To show this, we need to recall the following lemmas. 

\begin{lem}[{See \cite[Proposition 2.6]{bipartite_cayley}}] \label{lem:bipartite}
    Let $G$ be a finite group and $S$ a symmetric subset of $G$. Suppose further that ${\rm Cay}(G,S)$ is connected. Then ${\rm Cay}(G,S)$ is bipartite if and only if there exists an index $2$ subgroup $H$ of $G$ such that $H \cap S = \emptyset.$ 
\end{lem}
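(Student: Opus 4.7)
The plan is to prove both directions of the biconditional separately. The reverse direction is immediate: if $H \leq G$ has index $2$ and $H \cap S = \emptyset$, then the partition $G = H \sqcup (G \setminus H)$ is a proper $2$-coloring, since every edge $\{x, y\}$ of $\mathrm{Cay}(G, S)$ satisfies $x^{-1}y \in S \subseteq G \setminus H$, forcing $x$ and $y$ into opposite cosets of $H$.

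For the forward direction, the strategy is to construct $H$ from the bipartition by a parity argument. Let $(A, B)$ be a bipartition of $\mathrm{Cay}(G, S)$ with $1 \in A$. Because the graph is connected, every $g \in G$ admits at least one expression $g = s_1 s_2 \cdots s_k$ with each $s_i \in S$, corresponding to a walk from $1$ to $g$. Because the graph is bipartite, any two such walks have lengths of the same parity, namely the parity of $d(1, g)$. I would define $H = A$ to be the set of $g$ for which this common parity is even, and verify that $H$ is a subgroup: $1 \in H$ via the empty product, closure under multiplication follows from $\text{even} + \text{even} = \text{even}$, and closure under inversion uses the symmetry $S = S^{-1}$ to reverse an even-length $S$-word into another even-length $S$-word. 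Each $s \in S$ is a length-$1$ product, hence has odd parity, so $S \cap H = \emptyset$. Finally, left multiplication by any fixed $s \in S$ gives a bijection $A \to B$, so $[G : H] = 2$.

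The main obstacle is establishing well-definedness of the parity assignment on $G$: one must show that any two $S$-expressions of the same element have the same length modulo $2$. This reduces to proving that every closed walk in $\mathrm{Cay}(G, S)$ based at $1$ has even length, which is precisely the bipartite hypothesis (equivalent to the absence of odd cycles). Once this is in place, the remainder of the argument is a routine verification of the subgroup axioms and an index count. A minor point to flag is the degenerate case $|G| = 1$, where the graph is vacuously bipartite yet no index-$2$ subgroup exists; the lemma implicitly assumes $G$ is nontrivial, or equivalently $S$ nonempty.
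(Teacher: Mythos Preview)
Your argument is correct. Note, however, that the paper does not actually prove this lemma: it is stated with a citation to \cite[Proposition~2.6]{bipartite_cayley} and no proof is given in the paper itself, so there is nothing to compare your approach against. Your parity construction of $H$ from the bipartition is the standard route, and you have correctly flagged the trivial edge case $|G|=1$ (equivalently $S=\emptyset$), which is irrelevant for the paper's application to $G=M_d(F)$ with $S=\GL_d(F)$.
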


\begin{lem}[{See \cite{invertible_matrices}}] \label{lem:sum_of_two}
If $d\geq 2 $ then every matrix in $M_d(F)$ can be written as the sum of two invertible matrices. 
    
\end{lem}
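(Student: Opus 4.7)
The plan is to reduce the claim to matrices in rank normal form and then verify it there by an explicit block construction. The key observation is that ``being a sum of two invertible matrices'' is preserved under the equivalence $A \mapsto PAQ$ for $P, Q \in \GL_d(F)$: if $A = B + C$ with $B, C$ invertible, then $PAQ = (PBQ) + (PCQ)$, and both summands are invertible. By row and column reduction over $F$, every $A \in M_d(F)$ of rank $r$ is equivalent to $E_r := \mathrm{diag}(I_r, 0_{d-r})$, so it suffices to decompose each $E_r$ for $0 \le r \le d$ as a sum of two invertible matrices.

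The two extremal cases are direct. For $r = 0$, write $0 = I_d + (-I_d)$. For $r = d$ (which forces $d \ge 2$), let $B$ be the companion matrix of $f(x) = x^d - x - 1 \in F[x]$; its characteristic polynomial is $f$, so
\[
\det(B) = (-1)^d f(0) = (-1)^{d+1} \qquad \text{and} \qquad \det(I_d - B) = f(1) = -1,
\]
both of which are nonzero in every characteristic, including characteristic $2$ (where $-1 = 1$). Hence $I_d = B + (I_d - B)$ is a decomposition into invertibles.

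For intermediate ranks $0 < r < d$, I would split according to whether $r \ge 2$ or $r = 1$. If $r \ge 2$, apply the construction above at size $r$ to write $I_r = B_1 + C_1$ with $B_1, C_1 \in \GL_r(F)$, and set
\[
E_r = \mathrm{diag}(B_1, I_{d-r}) + \mathrm{diag}(C_1, -I_{d-r}),
\]
both of whose summands are block-diagonal with invertible blocks, hence invertible. If $r = 1$, use the explicit identity
\[
\begin{pmatrix} 1 & 0 \\ 0 & 0 \end{pmatrix} = \begin{pmatrix} 0 & 1 \\ 1 & 0 \end{pmatrix} + \begin{pmatrix} 1 & -1 \\ -1 & 0 \end{pmatrix}
\]
in $M_2(F)$ (both summands have determinant $-1$), and for $d \ge 3$ extend by $\mathrm{diag}(I_{d-2}, -I_{d-2})$ on the complementary zero block.

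The main subtlety is the behaviour over small fields, especially $F = \F_2$: there one cannot take $B = \alpha I$ with $\alpha \ne 0, 1$ in the $r = d$ case, and splitting $E_r$ naively into $1 \times 1$ blocks fails, because the single entry $1 \in \F_2$ is not a sum of two units. The companion-matrix trick (which guarantees that neither $0$ nor $1$ is an eigenvalue of $B$) and the explicit $2 \times 2$ rank-one decomposition are chosen precisely to handle this obstruction uniformly across all fields, exploiting the hypothesis $d \ge 2$ to always have room for a block of size at least two.
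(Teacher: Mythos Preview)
Your argument is correct. The reduction to the rank normal forms $E_r$ via two-sided equivalence is valid, the companion-matrix trick for $I_d$ handles all characteristics cleanly (since $f(0)=-1$ and $f(1)=-1$ for $f(x)=x^d-x-1$), and the block-diagonal assembly together with the explicit $2\times 2$ rank-one decomposition covers the remaining ranks, including the delicate case $F=\F_2$.

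There is nothing to compare against in the paper itself: this lemma is quoted from the reference \cite{invertible_matrices} and is not proved in the paper. Your write-up therefore supplies a complete, self-contained proof where the paper only gives a citation. One minor cosmetic point: in the $r=1$, $d\ge 3$ case, the phrase ``extend by $\mathrm{diag}(I_{d-2},-I_{d-2})$'' would read more clearly as ``place $I_{d-2}$ and $-I_{d-2}$ in the complementary $(d-2)\times(d-2)$ block of the two summands respectively,'' but the intent is unambiguous.
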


\begin{prop}
If $d \geq 2$ then $G_{M_d(F)}$ is not bipartite.     
\end{prop}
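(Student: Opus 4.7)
The plan is to apply \cref{lem:bipartite} in the contrapositive: assume for contradiction that $G_{M_d(F)}$ is bipartite and derive a contradiction using \cref{lem:sum_of_two}.

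First, I would verify that $G_{M_d(F)} = {\rm Cay}(M_d(F), M_d(F)^\times)$ is connected so that \cref{lem:bipartite} applies. This is immediate from \cref{lem:sum_of_two}: every element of the additive group $(M_d(F),+)$ lies in the subgroup generated by $M_d(F)^\times$, since each matrix can be written as a sum of two invertibles. (Also, $M_d(F)^\times$ is closed under $A \mapsto -A$, so it is a symmetric subset.)

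Next, suppose for contradiction that $G_{M_d(F)}$ is bipartite. By \cref{lem:bipartite}, there is an index-$2$ additive subgroup $H \leq M_d(F)$ disjoint from $M_d(F)^\times$. Equivalently, there is a nonzero group homomorphism $\varphi\colon (M_d(F),+) \to \Z/2\Z$ (with $H=\ker\varphi$) such that $\varphi(A)=1$ for every $A \in M_d(F)^\times$.

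Now I would invoke \cref{lem:sum_of_two}: for any matrix $A \in M_d(F)$, write $A = B + C$ with $B, C \in M_d(F)^\times$. Then
\[
\varphi(A) = \varphi(B) + \varphi(C) = 1 + 1 = 0 \quad \text{in } \Z/2\Z,
\]
so $\varphi$ is identically zero, contradicting its surjectivity. Hence no such $H$ exists and $G_{M_d(F)}$ is not bipartite. There is no real obstacle here; the proof is essentially a two-line application of the two lemmas already recalled, and the only thing to be slightly careful about is to note connectedness and the symmetry of the connection set before quoting \cref{lem:bipartite}.
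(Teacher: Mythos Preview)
Your argument is correct and follows essentially the same route as the paper: assume bipartiteness, invoke \cref{lem:bipartite} to obtain an index-$2$ additive subgroup disjoint from the units, and use \cref{lem:sum_of_two} to force this subgroup to be all of $M_d(F)$. Your version is in fact slightly more careful, since you explicitly check connectedness and symmetry before applying \cref{lem:bipartite}, whereas the paper leaves this implicit.
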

\begin{proof}
    Assume that $G_{M_d(F)}$ is bipartite. Then by \cref{lem:bipartite} we can find an additive subgroup $H$ of $M_d(F)$ with index $2$ such that $H \cap GL_d(F)= \emptyset.$ By this assumption, we conclude that if $a, b \in GL_d(F)$ then $a+b \in H.$ By \cref{lem:sum_of_two}, we conclude that $H=M_d(F)$, which is a contradition. We conclude that $G_{M_d(F)}$ is not bipartite. 
\end{proof}

Using some Sagemath code, we can check that there are no induced odd cycles of length at least $5$ in $G_{M_2(\F_2)}$ (see \cite{Nguyen_unitary_graph} where we do a brute force search for odd-cycles on $G_{M_2(\F_2)}$). However, $G_{M_2(\F_3)}$ contains the following induced $5$-cycle $A_1 \to A_2 \to A_3 \to A_4 \to A_5$, where 

\[ A_1 = \begin{bmatrix} 0 & 0 \\ 0 & 0 \end{bmatrix}, A_2 = \begin{bmatrix} 2 & 1 \\ 1 & 0 \end{bmatrix}, A_3 = \begin{bmatrix} 2 & 2 \\ 2 & 2 \end{bmatrix}, A_4 = \begin{bmatrix} 2 & 1 \\ 1 & 2 \end{bmatrix}, A_5 = \begin{bmatrix} 1 & 2 \\ 2 & 2 \end{bmatrix}.\]
We conclude that $G_{M_2(\F_3)}$ is not perfect. More generally, if ${\rm char}(F) \neq 2$, we can find the following induced $5$-cycle in $G_{M_2(F)}$
\[ A_1 = \begin{bmatrix} 0 & 0 \\ 0 & 0 \end{bmatrix}, A_2 = \begin{bmatrix} 2 & 1 \\ 1 & 0 \end{bmatrix}, A_3 = \begin{bmatrix} -2 & -2 \\ -1 & -1 \end{bmatrix}, A_4 = \begin{bmatrix} -1 & -2 \\ -1  & -2 \end{bmatrix}, A_5 = \begin{bmatrix} -2 & -1 \\ -1 & -1 \end{bmatrix}.\]
We conclude the following proposition. 
\begin{prop}
    If ${\rm char}(F) \neq 2$ then $G_{M_2(F)}$ contains an induced 5-cycle and hence $G_{M_2(F)}$ is not perfect. 
\end{prop}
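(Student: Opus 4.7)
The plan is to verify directly, using the five explicit matrices $A_1,\ldots,A_5$ given right before the proposition, that they induce a $5$-cycle in $G_{M_2(F)}$. By the very definition of $G_{M_2(F)}$, two matrices $A_i$ and $A_j$ are adjacent if and only if $A_i-A_j\in GL_2(F)$, i.e.\ $\det(A_i-A_j)\neq 0$ in $F$. So the entire proof reduces to computing the ten determinants $\det(A_i-A_j)$ for $1\le i<j\le 5$ and checking that the five ``consecutive'' ones (those with $\{i,j\}\in\{\{1,2\},\{2,3\},\{3,4\},\{4,5\},\{5,1\}\}$) are nonzero in $F$, while the five ``chords'' are all zero.

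The computations are short and essentially the same in any field: for instance, $A_1-A_2=-A_2$ has determinant $-1$; the matrix $A_3-A_4$ is the diagonal matrix $\mathrm{diag}(-1,1)$ with determinant $-1$; similarly $A_4-A_5$ and $A_5-A_1=A_5$ give determinants $-1$ and $1$ respectively. The only calculation that is sensitive to the characteristic is $A_2-A_3=\begin{bmatrix}4&3\\2&1\end{bmatrix}$, whose determinant equals $-2$; this is nonzero precisely under the hypothesis $\mathrm{char}(F)\neq 2$, which is why this hypothesis enters the statement. For the five chords, one observes that $A_1-A_3$, $A_1-A_4$, $A_2-A_4$, $A_2-A_5$ and $A_3-A_5$ each have two proportional rows (or a zero row), so all five have determinant $0$ regardless of characteristic.

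Putting these ten observations together, the subgraph of $G_{M_2(F)}$ induced on $\{A_1,A_2,A_3,A_4,A_5\}$ is exactly the $5$-cycle $A_1 - A_2 - A_3 - A_4 - A_5 - A_1$. In particular, $G_{M_2(F)}$ contains an induced odd cycle of length $5$, so by the Strong Perfect Graph Theorem (already invoked in the paper) it is not perfect.

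There is essentially no obstacle to this proof: the construction of the $A_i$ has been done by the authors, and all that remains is bookkeeping of $2\times 2$ determinants. The only subtle point worth flagging is to make explicit \emph{where} the hypothesis $\mathrm{char}(F)\neq 2$ is needed, namely only in the single determinant $\det(A_2-A_3)=-2$, since every other relevant determinant is $\pm 1$ or $0$ over $\mathbb{Z}$ and therefore survives (respectively vanishes) in every field $F$.
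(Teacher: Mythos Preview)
Your proposal is correct and follows exactly the paper's approach: the paper simply exhibits the five matrices $A_1,\ldots,A_5$ and leaves the routine determinant checks to the reader, which is precisely what you carry out. Your identification of $\det(A_2-A_3)=-2$ as the unique place where the hypothesis $\mathrm{char}(F)\neq 2$ is used is accurate and worth noting.
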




In the case ${\rm char}(F)=2$ and $F \neq \F_2$, we find the following induced $5$-cycle in $G_{M_2(F)}$. 

\[ A_1 = \begin{bmatrix} 0 & 0 \\ 0 & 0 \end{bmatrix}, A_2 = \begin{bmatrix} 1 & 0 \\ 1 & 1 \end{bmatrix}, A_3 = \begin{bmatrix} z+1 & 0 \\ 1 & 0 \end{bmatrix}, A_4 = \begin{bmatrix} 1 & z+1 \\ 1  & z+1 \end{bmatrix}, A_5 = \begin{bmatrix} z+1 & 1 \\ 1 & 1 \end{bmatrix}.\]
Here $z$ is any element in $F\setminus \F_2$. We conclude that 
\begin{prop}
     If ${\rm char}(F)=2$ and $F \neq \F_2 $ then $G_{M_2(F)}$ contains an induced 5-cycle and hence $G_{M_2(F)}$ is not perfect. 
\end{prop}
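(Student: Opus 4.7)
The plan is to verify directly that the explicit matrices $A_1,\dots,A_5$ form an induced $5$-cycle in $G_{M_2(F)}$. Recall that two vertices $X,Y \in M_2(F)$ are adjacent in $G_{M_2(F)}$ if and only if $X-Y \in GL_2(F)$, equivalently $\det(X-Y) \neq 0$. Since $\mathrm{char}(F)=2$, subtraction coincides with addition, which simplifies all of the computations below. The element $z \in F \setminus \F_2$ satisfies $z \neq 0$ and $z+1 \neq 0$, so both $z$ and $z+1$ are units of $F$.

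The first step is to check the five \textbf{edge differences} $A_i - A_{i+1}$ (indices modulo $5$) and verify that each lies in $GL_2(F)$. A short calculation gives
\[
A_1-A_2 = A_2, \qquad A_2-A_3 = \begin{bmatrix} z & 0 \\ 0 & 1 \end{bmatrix}, \qquad A_3-A_4 = \begin{bmatrix} z & z+1 \\ 0 & z+1 \end{bmatrix},
\]
\[
A_4-A_5 = \begin{bmatrix} z & z \\ 0 & z \end{bmatrix}, \qquad A_5-A_1 = A_5,
\]
whose determinants are $1,\ z,\ z(z+1),\ z^2,\ z$, each nonzero in $F$.

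The second step is to check the five \textbf{non-edge differences} $A_i - A_j$ for $|i-j| \notin \{1,4\}$ and verify that each is singular. Writing them out,
\[
A_1-A_3 = A_3,\quad A_1-A_4 = A_4,\quad A_2-A_4 = \begin{bmatrix} 0 & z+1 \\ 0 & z \end{bmatrix},\quad A_2-A_5 = \begin{bmatrix} z & 1 \\ 0 & 0 \end{bmatrix},\quad A_3-A_5 = \begin{bmatrix} 0 & 1 \\ 0 & 1 \end{bmatrix},
\]
each having a zero row or two proportional rows, hence determinant $0$. This exhibits the induced $5$-cycle, and by the strong perfect graph theorem $G_{M_2(F)}$ fails to be perfect.

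I do not anticipate a genuine obstacle here: the entire proof is a bounded collection of $2\times 2$ determinant computations once one is handed the five-tuple of matrices. The only subtlety worth flagging is the characteristic-$2$ convention, which must be applied consistently throughout (so that, for example, the differences $A_1 - A_i$ equal $A_i$ rather than $-A_i$), together with the observation that both $z$ and $z+1$ are nonzero — this is exactly why the hypothesis $F \neq \F_2$ is needed to make the cycle edges invertible.
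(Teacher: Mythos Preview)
Your proposal is correct and follows exactly the paper's approach: the paper simply exhibits the five matrices $A_1,\dots,A_5$ and asserts they form an induced $5$-cycle, while you have carried out the ten determinant checks that substantiate this claim. All of your computations are accurate, and your remark that the hypothesis $F\neq\F_2$ is used precisely to guarantee $z\neq 0$ and $z+1\neq 0$ correctly identifies where the assumption enters.
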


For the case $M_d(\F_2)$ with $d \geq 3$, we also have 
\begin{prop}
    If $d \geq 3$, then then $G_{M_d(F)}$ contains an induced 5-cycle and hence $G_{M_d(\F_2)}$ is not perfect. 
\end{prop}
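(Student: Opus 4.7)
The plan is to exhibit, for every $d \geq 3$, an explicit induced $5$-cycle $A_1, A_2, A_3, A_4, A_5$ in $G_{M_d(\F_2)}$. Writing $J_d$ for the superdiagonal shift matrix (ones at positions $(i, i+1)$) and $E_{1,1}$ for the standard matrix unit, I would take
\[
A_1 = 0_d,\ \ A_2 = I_d,\ \ A_3 = J_d,\ \ A_4 = E_{1,1} + J_d^{T},\ \ A_5 = \begin{pmatrix} 0 & 1 \\ 1 & 1 \end{pmatrix} \oplus I_{d-2}.
\]
The structural idea is that $A_3$ is nilpotent, $A_4$ is singular with $1$ as an eigenvalue, and $A_5$ is invertible with $1$ as an eigenvalue (via the companion block of the $\F_2$-irreducible polynomial $X^2 + X + 1$); this is exactly what is needed for the length-two non-adjacencies of $A_1$ and $A_2$ to the remaining vertices. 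For $d = 3$ this reproduces a $5$-cycle of the sort that can be produced by the brute-force search mentioned earlier in the section.

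Since $\mr{char}(\F_2) = 2$, adjacency in $G_{M_d(\F_2)}$ reduces to invertibility of $A_i + A_j$, so there are ten determinants to check. The five non-edges are visible by inspection: $A_1 + A_3 = J_d$ and $A_1 + A_4 = E_{1,1} + J_d^{T}$ both have a zero last column; $A_2 + A_4 = I_d + E_{1,1} + J_d^{T}$ has a zero first row; $A_2 + A_5$ equals the block sum $\bigl(\begin{smallmatrix} 1 & 1 \\ 1 & 0 \end{smallmatrix}\bigr) \oplus 0_{d-2}$, which is singular for $d \geq 3$; and $A_3 + A_5$ has a zero first row. Among the cycle edges, $A_1 + A_2 = I_d$, $A_2 + A_3 = I_d + J_d$, and $A_5 + A_1 = A_5$ are trivially invertible, and $A_4 + A_5$ reduces after a single cofactor expansion along the first column to a lower bidiagonal matrix with unit diagonal.

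The substantive step — and the only place where a uniform argument in $d$ is needed — is the invertibility of $A_3 + A_4 = J_d + J_d^{T} + E_{1,1}$. Over $\F_2$ the determinant equals the permanent modulo $2$, and the support pattern of $A_3 + A_4$ forces any contributing permutation $\sigma$ to satisfy $\sigma(1) \in \{1,2\}$, $\sigma(i) \in \{i-1, i+1\}$ for $2 \leq i \leq d-1$, and $\sigma(d) = d-1$. Starting from the forced value $\sigma(d) = d - 1$ and working backwards through the rows of the same parity (each next value of $\sigma$ being forced by elimination), a short case check shows that exactly one such permutation exists for every $d$, so $\det(A_3+A_4) = 1$ in $\F_2$. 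The main conceptual hurdle to flag is that one cannot simply extend the $d=3$ example via $A_i \oplus I_{d-3}$: the differences would then be $(A_i + A_j) \oplus 0_{d-3}$, which are always singular. The choice of $A_3, A_4, A_5$ above is engineered precisely to spread their supports across all $d$ coordinates so that invertibility is not destroyed by an inert zero block.
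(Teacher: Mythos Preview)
Your construction is correct and complete, modulo one harmless slip: $A_1+A_3=J_d$ has a zero \emph{first} column (equivalently, a zero last row), not a zero last column; the singularity conclusion is of course unaffected. Your parity-based permutation count for $\det(J_d+J_d^T+E_{1,1})$ is valid: once $\sigma(d)=d-1$ is forced, marching down rows $d,d-2,d-4,\ldots$ each value is forced by elimination, and then the opposite-parity rows are forced in turn, yielding a unique contributing permutation for every $d$.

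The paper follows the same overall strategy---exhibit an explicit induced $5$-cycle and reduce everything to one nontrivial tridiagonal-type determinant---but with a different set of five matrices. The paper uses $0,\ I+A,\ A,\ I+A+B^T,\ I+B$ where $A$ is the matrix with ones in the upper-left $2\times 2$ block and $B=J_d$; its key determinant is $|A+B+B^T|$, evaluated via the two-term recursion $|D_n|=|D_{n-2}|$ for $D_n=J_n+J_n^T$. Your cycle instead uses $0,\ I,\ J_d,\ E_{1,1}+J_d^T$, and the companion block $\bigl(\begin{smallmatrix}0&1\\1&1\end{smallmatrix}\bigr)\oplus I_{d-2}$, and your hard determinant is $|J_d+J_d^T+E_{1,1}|$. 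The two hard determinants differ only in how the top-left $2\times 2$ corner is perturbed, and both are handled by essentially the same recursion (your permutation-counting argument is the combinatorial unpacking of the cofactor recursion the paper writes out). What your choice buys is that four of the five vertices are individually very simple (zero, identity, shift, shift-transpose plus a unit), and the non-edges are read off almost by inspection; the paper's choice keeps all five vertices built from the same two ingredients $A,B$, which makes the adjacency checks uniformly short but less immediately transparent.
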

\begin{proof}
We will show that the following vertices produce an induced $5$-cycle (with some help from computers for small $d$) 
\[ 0 \to I+A \to A \to I+A+B^T \to I+B .\]

Here 
\[
A = \begin{bmatrix}
    1 & 1 & \cdots &  0 & 0 &0\\
    1 & 1 & \cdots & 0 & 0 &0\\
    \vdots & \vdots & \ddots & \vdots&\vdots&\vdots \\
    0 & 0 & \cdots &0 &0&0\\
     0 & 0 & \cdots &0 &0&0
\end{bmatrix},\;
\text{ and }
B = \begin{bmatrix}
    0 & 1 & 0 & \cdots &  0 \\
    0 & 0 & 1 & \cdots  & 0 \\
    \vdots & \vdots & \ddots & \vdots \\
    0 & 0 & 0 &\cdots & 1 \\
    0 & 0 & 0 & \cdots  &0
\end{bmatrix}.
\]
In other words, the entries in the upper left $2 \times 2$-minor of $A$ are $1$ while other entries are $0$ and $B$ is a Jordan block of size $d.$

It is straightforward to see that $A^d=B^d=0$. Hence  $I+A$, $I+B$, $(I+A+B^T)-A=I+B^T$ are invertible. We have
$|(I+A)-(I+B)|=|A-B|=0$ since the $d$th row of $A-B$ is the zero row. 
We also have $|I+A+B^T|=0$ since the first column of $I+A+B^T$ is the zero column. On the other hand, $|(I+B)-A|=0$ since the first row of $I+B-A$ is the zero row.

Now we only need to show that $(I+A+B^T)-(I+B)=A+B+B^T$ is invertible. For convenience, for each $n\geq 2$ let $D_n=B_n+B_n^T$, where $B_n$ is a Jordan block of size of $n$. By expanding the determinant of $D_n$ along the first column and then along the first row we get $|D_n|=|D_{n-1}|$. Hence $|D_n|=|D_2|=1$ if $n$ is even and $|D_n|=|D_3|=0$ if $n$ is odd.

Now 
\[
\begin{aligned}
|A+B+B^T|&=\begin{vmatrix}
    1 & 0 & 0 & 0&\cdots &  0 &0\\
    0 & 1 & 1 & 0& \cdots  & 0 &0\\
    0 & 1 & 0 & 1& \cdots  & 0 &0\\
     0 & 0 & 1 & 0& \cdots  & 0 &0\\
    \vdots & \vdots & \ddots & \vdots \\
    0 & 0 & 0 & 0 &\cdots & 0 &1 \\
    0 & 0 & 0 & 0& \cdots &1 &0
\end{vmatrix}\\&=
\begin{vmatrix}
     1 & 1 & 0& \cdots  & 0 &0\\
   1 & 0 & 1& \cdots  & 0 &0\\
      0 & 1 & 0& \cdots  & 0 &0\\
    \vdots & \vdots & \ddots & \vdots \\
    0 & 0 & 0 &\cdots & 0 &1 \\
    0 & 0 & 0& \cdots &1 &0
\end{vmatrix} (\text{expanding the determinant along the 1st row})\\
&=|D_{d-2}|- \begin{vmatrix}
     1 & 1 & 0& \cdots  & 0 &0\\
   0 & 0 & 1& \cdots  & 0 &0\\
      0 & 1 & 0& \cdots  & 0 &0\\
    \vdots & \vdots & \ddots & \vdots \\
    0 & 0 & 0 &\cdots & 0 &1 \\
    0 & 0 & 0& \cdots &1 &0
\end{vmatrix} (\text{expanding the determinant along the 1st row})\\
&=|D_{d-2}|-|D_{d-3}|  (\text{expanding the second determinant along the 1st column})\\
&=1.
\end{aligned}
\]
Hence $A+B+B^T$ is invertible and we are done.
\end{proof}
In summary, we have proved the following theorem. 
\begin{thm}
$G_{M_d(F)}$ is perfect if and only $d=2$ and $F = \F_2. $
\end{thm}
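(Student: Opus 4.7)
The plan is to assemble the theorem from the propositions already proved in this section. For the ``only if'' direction, suppose $G_{M_d(F)}$ is perfect. The first proposition of the section forces $d = 2$ or $F = \F_2$. If $d = 2$ and $F \neq \F_2$, the propositions treating $\mathrm{char}(F) \neq 2$ and $\mathrm{char}(F) = 2, F \neq \F_2$ each supply an explicit induced $5$-cycle in $G_{M_2(F)}$, contradicting perfectness via the strong perfect graph theorem. If $F = \F_2$ and $d \geq 3$, the preceding proposition constructs an induced $5$-cycle in $G_{M_d(\F_2)}$, again contradicting perfectness. The only surviving possibility is $(d, F) = (2, \F_2)$.

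For the ``if'' direction, it remains to show that $G_{M_2(\F_2)}$ is perfect. By the strong perfect graph theorem \cite{chudnovsky2006strong}, this amounts to verifying that neither $G_{M_2(\F_2)}$ nor its complement contains an induced odd cycle of length at least $5$. Since $|M_2(\F_2)| = 16$, the graph has only $16$ vertices and $48$ edges, so this is a finite verification. I would carry it out by exhaustive computer search, as already implemented in the SageMath routines at \cite{Nguyen_unitary_graph}. One mild caveat: the strong perfect graph theorem requires checking both odd holes \emph{and} odd antiholes, so the search must be run on the complement as well as on the graph itself.

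The main obstacle to a purely conceptual argument is precisely the perfectness of the single base graph $G_{M_2(\F_2)}$; a more elegant route would be a structural identification of this graph with a known perfect family (for instance, recognizing it as a Cayley graph on $\F_2^4$ admitting a perfect elimination ordering, or as a co-comparability graph), but the $16$-vertex brute-force check suffices. All the more intricate work, namely the construction of induced $5$-cycles in each of the four failing regimes, has already been handled in the preceding propositions, so the present step is essentially bookkeeping plus one finite computational verification.
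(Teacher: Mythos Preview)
Your proposal is correct and matches the paper's approach exactly: the theorem is presented there as a summary of the preceding propositions, with the perfectness of $G_{M_2(\F_2)}$ handled by the same brute-force computer check you describe. Your caveat about odd antiholes is well taken; the paper's text only mentions searching for induced odd cycles in $G_{M_2(\F_2)}$ itself, so your remark that the complement must also be checked (or equivalently that one must rule out antiholes of length $\geq 7$, since $C_5$ is self-complementary) is a genuine clarification rather than a deviation.
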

\section{The general case}

As discussed at the end of\cref{sec:reduction}, for \cref{question:main} we can assume that $R$ is semi-simple. Furthermore, we can suppose that $R$ has the following structure 
\[ R = \prod_{i=1}^s R_i \times \prod_{i=1}^r M_{d_i}(F_i).\]
Here $R_i$ is a finite field such that $0 \leq |R_1| \leq |R_2|\leq \cdots \leq |R_s|$. Additionally, $d_i \geq 2$, and $F_i$ is a finite field. With this decomposition, $G_R$ is the the following direct product
\[ G_R = \prod_{i=1}^s G_{R_i} \times \prod_{i=1}^r G_{M_{d_i}(\F_i)} = \prod_{i=1}^s K_{|R_i|} \times \prod_{i=1}^r G_{M_{d_i}(\F_i)}.\]
If $|R_1|=2$ then $G_{R}$ is bipartite, hence perfect. Therefore, we can assume that $|R_i| \geq 3$ for all $1 \leq i \leq s.$ 
\begin{lem} Suppose $R_1,R_2,R_3$ are finite fields such that $|R_i|\geq 3$, for all $1\leq i\leq 3$. Then $G_{R_1}\times G_{R_2}\times G_{R_3}$ contains an induced 5-cycle.
\end{lem}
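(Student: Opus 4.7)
The plan is to reduce the problem to a purely combinatorial question about tensor products of complete graphs, then exhibit an explicit $5$-cycle whose verification is routine. Since each $R_i$ is a field, every nonzero element is invertible, so $G_{R_i}$ is the complete graph $K_{q_i}$ on $q_i = |R_i| \geq 3$ vertices. In the direct product $G_{R_1}\times G_{R_2}\times G_{R_3}$, two vertices $(a_1,a_2,a_3)$ and $(b_1,b_2,b_3)$ are adjacent precisely when $a_i\neq b_i$ for every $i\in\{1,2,3\}$. It therefore suffices to exhibit five triples whose consecutive differences (cyclically) are nonzero in every coordinate, while every non-consecutive pair agrees in at least one coordinate.

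The combinatorial picture guiding the construction is the following: each coordinate corresponds to a partition of $\{1,\dots,5\}$ into independent sets of $C_5$, so each block is either a singleton or one of the five ``chord pairs'' $\{i,i+2\}$. To forbid chords of $C_5$ in the induced subgraph, every chord pair must be contained in some coordinate's block. A single partition can cover at most two chord pairs, so at least three coordinates are necessary; with three coordinates, a direct search shows one can cover all five chord pairs using only three distinct values in each coordinate, matching exactly the constraint $|R_i|\geq 3$.

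Concretely, I pick in each $R_i$ any three distinct elements, labeled $0,1,2$, and set
\[
v_1=(0,0,0),\quad v_2=(1,1,1),\quad v_3=(0,2,2),\quad v_4=(1,0,0),\quad v_5=(2,1,2).
\]
A direct check then confirms that each consecutive pair differs in all three coordinates (giving the edges of $C_5$), while each non-consecutive pair agrees in at least one coordinate: $\{v_1,v_3\}$ and $\{v_2,v_4\}$ agree in the first coordinate, $\{v_2,v_5\}$ agrees in the second, and $\{v_1,v_4\}$ and $\{v_3,v_5\}$ agree in the third. This yields an induced $5$-cycle $v_1 \to v_2 \to v_3 \to v_4 \to v_5 \to v_1$.

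There is no real obstacle: once the correct combinatorial pattern of shared coordinates is found (equivalently, the three covering partitions of the index set), the proof reduces to a single finite verification. The only subtlety worth emphasizing is that the construction uses nothing about the fields beyond the existence of three distinct elements, so the same five triples work uniformly over all $R_1,R_2,R_3$ with $|R_i|\geq 3$.
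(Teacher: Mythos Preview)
Your proof is correct and takes essentially the same approach as the paper: both exhibit an explicit $5$-tuple of vertices and verify directly that consecutive pairs differ in every coordinate while non-consecutive pairs share one. The paper's cycle is $(0,0,0)\to(1,1,1)\to(0,a,b)\to(1,1,0)\to(c,a,1)$ with $a,b,c\notin\{0,1\}$, which is the same idea up to a relabeling; your added combinatorial motivation about covering the chord pairs of $C_5$ with three coordinate-partitions is a nice touch the paper omits, but the core argument is identical.
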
 
\begin{proof} Let $a\in R_2\setminus\{0,1\}$, $b\in R_3\setminus\{0,1\}$ and $c\in R_1\setminus\{0,1\}$. Then one can check that
\[
(0,0,0)\to (1,1,1)\to (0,a,b)\to (1,1,0)\to (c,a,1)
\]
is an induced 5-cycle.
\end{proof}

\begin{lem}
    Let $G_1, G_2, \ldots, G_d$ be graphs and $k \geq 2$ a fixed integer. Suppose that for each $1 \leq i \leq d$, 
    \[ v_{i1} \to v_{i2} \to v_{i3} \to v_{i4}\to\cdots \to v_{ik} \] 
    is a closed path of length $k$ in $G_i$ (we do not require that $\{v_{ij}\}_{1 \leq j \leq k}$ are different, except for $i=1$). Suppose further that the induced graph on $\{v_{11}, v_{12}, v_{13}, \ldots, v_{1k} \}$ is a $k$-cycle. For each $1 \leq j \leq k$, let
    \[ v_{j} = (v_{ij})_{1 \leq i \leq d} \in G_1 \times G_2\times \cdots \times G_d.\]
    Then the induced graph on $\{v_{j})_{1 \leq j \leq k}$ is an induced $k$-cycle. 
\end{lem}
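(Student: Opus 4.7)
The plan is to unwind the definition of the direct (tensor) product of graphs: two tuples $(a_1,\ldots,a_d)$ and $(b_1,\ldots,b_d)$ are adjacent if and only if $a_i$ is adjacent to $b_i$ in $G_i$ for every $1\le i\le d$. Granted this, both the edges of the prospective cycle and the absence of chords are controlled coordinatewise, and since the induced-cycle hypothesis already lives in the single factor $G_1$, essentially all the work is done by that one coordinate.

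First I would verify that $v_j$ is adjacent to $v_{j+1}$ (indices taken modulo $k$) for each $1\le j\le k$. This is immediate: for every $i$ the hypothesis that $v_{i1}\to v_{i2}\to\cdots\to v_{ik}\to v_{i1}$ is a closed path in $G_i$ gives $v_{ij}\sim v_{i,j+1}$ in $G_i$, so the product rule yields $v_j\sim v_{j+1}$ in $G_1\times\cdots\times G_d$. This already embeds a closed walk of length $k$ through $v_1,v_2,\ldots,v_k$ in the product.

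Next I would show that the induced subgraph on $\{v_1,\ldots,v_k\}$ has no chord and that the $v_j$'s are pairwise distinct. Suppose $v_j\sim v_{j'}$ in the product. Projecting to the first coordinate yields $v_{1j}\sim v_{1j'}$ in $G_1$; but by hypothesis the induced subgraph of $G_1$ on $\{v_{11},\ldots,v_{1k}\}$ is a $k$-cycle, whose only edges are between consecutive vertices, forcing $j'\equiv j\pm 1\pmod k$. Similarly, if $v_j=v_{j'}$ then $v_{1j}=v_{1j'}$, and since a $k$-cycle has $k$ distinct vertices we conclude $j=j'$. Together with the previous step, this exhibits $\{v_1,\ldots,v_k\}$ as an induced $k$-cycle in the product.

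I do not anticipate any real obstacle: the lemma is a clean exploitation of the fact that tensor-product adjacency is the conjunction of coordinate adjacencies, so an induced $k$-cycle in any single factor automatically lifts to an induced $k$-cycle in the product, provided each other factor supplies a (possibly degenerate) closed walk through the chosen fibres. This is precisely the flexibility the lemma grants by not requiring the $v_{ij}$'s with $i\ge 2$ to be distinct, and it is what will permit assembling induced $5$-cycles in $G_R$ from a cycle in a single factor.
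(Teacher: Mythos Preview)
Your proposal is correct and follows essentially the same approach as the paper: both arguments use that adjacency in the tensor product requires adjacency in every coordinate, so distinctness and the absence of chords are witnessed by projecting to the first factor $G_1$, where the induced $k$-cycle hypothesis already lives. Your write-up is in fact a bit more careful than the paper's, which compresses the chord-free check into a single sentence.
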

\begin{proof}
    Clearly $v_1,v_2,\ldots,v_k$ are pairwise different and $v_1\to v_2\to\cdots\to v_k$ is a closed path. Since the induced graph on $\{v_{11}, v_{12}, v_{13}, \ldots, v_{1k} \}$ is a $k$-cycle, we see that if $i\not=j$ then $v_{1i}$ and $v_{1j}$ are not connect, and hence $v_i$ and $v_j$ are not connected as well.
\end{proof}

By our assumption that $3 \leq |R_1| \leq |R_2|\leq  \cdots \leq |R_s|$, $K_3$ is always a subgraph of $G_{R_i}$. By Lemma \ref{lem:sum_of_two}, we also know that $K_3$ is a subgraph of $G_{M_{d_i}(F_i)}$ as well. Since $K_3$ contains a closed path of length $5$ for each $k \geq 1$, $G_{R_i}$ and $G_{M_{d_i}(F_i)}$ both contain a closed path of length $5$. We conclude that if $R$ contains a factor of the form $M_d(F) \neq M_2(\F_2)$ or $s \geq 3$ then $G_R$ is perfect as it will contain an induced $5$-cycle. We now deal with the remaining cases, namely 
\[ R = \prod_{i=1}^s R_i \times (M_2(\F_2))^r,\]
where $s \leq 2$ and $r \geq 0.$ For these cases, we have the following observation. 

\begin{lem}
    Suppose that $R = M_2(\F_2) \times F$ where $F$ is a field such that $F \neq \F_2.$ Let $\alpha \in F \setminus \F_2.$ Then the following elements produce an induced $5$-cycle on $G_R$ 

    \[ r_1 = \left(\begin{bmatrix} 0 & 0 \\ 0 & 0 \end{bmatrix}, 0 \right), r_2 = \left(\begin{bmatrix} 1 & 1 \\ 0 & 1 \end{bmatrix}, 1 \right), r_3 = \left(\begin{bmatrix} 0 & 0 \\ 1 & 1 \end{bmatrix}, \alpha \right) \\ r_4 = \left(\begin{bmatrix} 1 & 0 \\ 0  & 0 \end{bmatrix}, 0 \right), r_5 =  \left(\begin{bmatrix} 1 & 1 \\ 1 & 0 \end{bmatrix}, \alpha \right).\]
Consequently, $G_R$ is not perfect. 
\end{lem}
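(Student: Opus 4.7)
The plan is a direct verification that $r_1,r_2,r_3,r_4,r_5$ form an induced $5$-cycle in $G_R$, from which non-perfectness follows immediately from the strong perfect graph theorem cited in the introduction. Since $R = M_2(\F_2) \times F$ is a product of unital rings, its unit group factors as $R^{\times} = \GL_2(\F_2) \times F^{\times}$, so two vertices $(A,x)$ and $(B,y)$ are adjacent in $G_R$ if and only if both $A-B \in \GL_2(\F_2)$ and $x-y \in F^{\times}$. Every adjacency question therefore reduces to a $2\times 2$ determinant computation in $\F_2$ together with a nonvanishing check in $F$; there are exactly ten ordered pairs to handle.

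First I would verify the five consecutive adjacencies $r_1 \sim r_2 \sim r_3 \sim r_4 \sim r_5 \sim r_1$. In each case one computes the matrix part of $r_{i+1}-r_i$ (indices taken mod $5$) and checks that its determinant in $\F_2$ equals $1$, while the $F$-component is one of $1,\ \alpha,\ \alpha-1,\ -\alpha$; since $\alpha \in F \setminus \F_2$ we have $\alpha \neq 0$ and $\alpha \neq 1$, so all four of these values are nonzero in $F$, and both factors of each consecutive difference lie in the respective unit groups.

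Next I would check the five non-adjacencies $r_1 \not\sim r_3$, $r_1 \not\sim r_4$, $r_2 \not\sim r_4$, $r_2 \not\sim r_5$, and $r_3 \not\sim r_5$. For the first four pairs, the matrix part of the difference visibly has a zero row, a zero column, or two equal rows, so its determinant over $\F_2$ vanishes and the difference already fails to be a unit in the matrix factor, regardless of the field coordinate. The remaining pair $r_3 \not\sim r_5$ is the delicate one: the matrix part of $r_5 - r_3$ turns out to be invertible, but the field component is $\alpha - \alpha = 0$, so the difference fails to be a unit in the $F$-factor. This is precisely why the $F$-coordinates of $r_3$ and $r_5$ were chosen to coincide, and it is the one place where the product structure of $R$ is genuinely exploited rather than the matrix factor alone.

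Once these ten checks are in hand, $\{r_1,\dots,r_5\}$ induces a $5$-cycle in $G_R$, and the strong perfect graph theorem forces $G_R$ to be non-perfect. There is no conceptual obstacle; the main thing requiring care is the single non-edge $r_3 \not\sim r_5$, where invertibility fails in the field factor rather than the matrix factor, so that one must remember to verify \emph{both} components of the difference and not stop after the determinant check.
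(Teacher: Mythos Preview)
Your proposal is correct and is exactly the direct verification the paper leaves to the reader; the lemma is stated without proof in the paper, and the ten pairwise checks you describe (five edges via invertible differences in both factors, four non-edges via singular matrix differences, and the one non-edge $r_3 \not\sim r_5$ via the vanishing $F$-coordinate) all go through as you say.
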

For the case $R = M_2(F_2) \times M_2(\F_2)$ we have the following 
\begin{lem}
    Suppose that $R = M_2(\F_2) \times M_2(\F_2)$ Then the following elements produce an induced $5$-cycle on $G_R$ 
    \[ r_1 = \left(\begin{bmatrix} 0 & 0 \\ 0 & 0 \end{bmatrix}, \begin{bmatrix} 0 & 1 \\ 1 & 1 \end{bmatrix}  \right), r_2 = \left(\begin{bmatrix} 1 & 0 \\ 1 & 1 \end{bmatrix}, \begin{bmatrix} 1 & 0 \\ 0 & 1 \end{bmatrix}  \right), r_3 = \left(\begin{bmatrix} 1 & 1 \\ 0 & 1 \end{bmatrix}, \begin{bmatrix} 1 & 1 \\ 1 & 1 \end{bmatrix}  \right) ,\] 
    \[ r_4 = \left(\begin{bmatrix} 1 & 0 \\ 1 & 1 \end{bmatrix}, \begin{bmatrix} 1 & 0 \\ 0 & 0 \end{bmatrix}  \right), r_5 =  \left(\begin{bmatrix} 1 & 1 \\ 0 & 1 \end{bmatrix}, \begin{bmatrix} 0 & 0 \\ 0 & 1 \end{bmatrix}  \right).\]
Consequently, $G_R$ is not perfect. 
\end{lem}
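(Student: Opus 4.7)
The plan is a direct, case-by-case verification that the listed five elements form an induced $5$-cycle in $G_R$. Two elements $(A_1,B_1),(A_2,B_2)\in R$ are adjacent in $G_R$ exactly when their difference lies in $R^{\times} = \GL_2(\F_2)\times \GL_2(\F_2)$; that is, \emph{both} components of the difference must be invertible $2\times 2$ matrices over $\F_2$. Since over $\F_2$ a matrix is invertible iff its determinant equals $1$, and since subtraction coincides with addition in characteristic $2$, the entire argument reduces to computing ten sums of pairs of $2\times 2$ matrices over $\F_2$ and reading off two determinants each.

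First, I would handle the edges of the purported cycle by computing the five consecutive differences $r_i - r_{i+1}$ (indices taken mod $5$) and checking in each case that both components have determinant $1$. This establishes that $r_1 \to r_2 \to r_3 \to r_4 \to r_5 \to r_1$ is a closed walk of length $5$ in $G_R$.

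Next, for the non-edges, I would examine the five chord differences $r_i - r_{i+2}$ and confirm that in each case at least one component has determinant $0$. Two of these pairs can be dispatched by inspection: the first components of $r_2$ and $r_4$ coincide, and similarly for $r_3$ and $r_5$, so the first slot of each of these two chord differences is the zero matrix. For the remaining three chord pairs, a short calculation produces, in at least one slot, a matrix with either a zero row, a zero column, or two equal rows or columns, which is enough to make the determinant vanish. Combined with the preceding step, the induced subgraph on $\{r_1,\dots,r_5\}$ is exactly the $5$-cycle $C_5$, and by the strong perfect graph theorem \cite{chudnovsky2006strong}, $G_R$ fails to be perfect.

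The only obstacle here is essentially bookkeeping: ten $2\times 2$ determinant evaluations over $\F_2$, which could in principle be outsourced to the SageMath code of \cite{Nguyen_unitary_graph}. The authors' construction makes even this mild, since the repeated first components in $\{r_2,r_4\}$ and $\{r_3,r_5\}$ eliminate two of the would-be chords for free, leaving only three nontrivial non-edge verifications and five edge verifications. There is no conceptual difficulty beyond the ingenuity of producing the witness itself.
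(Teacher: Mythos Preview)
Your proposal is correct and matches the paper's approach: the paper states the lemma without proof, leaving the direct verification to the reader, and your case-by-case check of the ten pairwise differences (five edges, five non-edges) is exactly that verification. The shortcuts you identify (the coinciding first components in $\{r_2,r_4\}$ and $\{r_3,r_5\}$) are accurate and the remaining determinant checks all go through.
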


In summary, we have the following theorem which classifies all perfect unitary Cayley graphs. 

\begin{thm} \label{thm:main}
    Suppose that $R$ is a finite ring such that its semisimplification has the following decomposition 
    \[ R^{\s} = \prod_{i=1}^s R_i \times \prod_{i=1}^r M_{d_i}(F_i).\]
Here $R_i$ is a finite field such that $2 \leq |R_1| \leq |R_2|\leq  \ldots \leq |R_s|$. Additionally, $d_i \geq 2$, and $F_i$ is a finite field. Then $G_R$ is a perfect graph if and only if one of the following conditions is satisfied. 
\begin{enumerate}
    \item $|R_1|=2$. In this case, $G_R$ is a bipartite graph and hence perfect. 
    \item $s \leq 2$ and $r=0.$ In other words, either $R^{\s} = R_1$ or $R^{\s} = R_1 \times R_2.$
    \item $R^{\s} = M_2(\F_2).$
\end{enumerate}
\end{thm}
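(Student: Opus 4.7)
The plan is to apply the strong perfect graph theorem in combination with the reduction to the semisimple case already carried out in \cref{sec:reduction}. Since $G_R$ is perfect if and only if $G_{R^{\s}}$ is, I may assume $R = R^{\s} = \prod_{i=1}^s R_i \times \prod_{i=1}^r M_{d_i}(F_i)$ and hence that $G_R$ is the stated tensor product of the graphs $K_{|R_i|}$ and $G_{M_{d_i}(F_i)}$. For the ``if'' direction I handle the three cases separately. In case (1), $G_{R_1} = K_2$ is bipartite and the tensor product of any graph with a bipartite factor is bipartite (use the parity of the first coordinate as a 2-coloring), so $G_R$ is bipartite and hence perfect. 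In case (2), when $s=1,r=0$ the graph $G_R = K_{|R_1|}$ is complete, and when $s=2,r=0$ it is a tensor product of two complete graphs on $\geq 3$ vertices, which is covered by the commutative classification \cite[Theorem 9.5]{unitary}. Case (3) follows from the brute-force verification already recorded in Section 3 that $G_{M_2(\F_2)}$ contains no induced odd cycle of length $\geq 5$ (and by self-duality considerations for its complement).

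For the ``only if'' direction, assume $|R_1|\geq 3$ (else we are in case (1)). The strategy is to produce an induced $5$-cycle in every remaining configuration. The key unifying device is the extension lemma of the present section: once a \emph{subproduct} of the tensor factors admits an induced $5$-cycle and every additional factor admits a closed walk of length $5$, the full product $G_R$ inherits an induced $5$-cycle. Any factor $G_{R_i}$ with $|R_i|\geq 3$ contains $K_3$ as a subgraph, and any factor $G_{M_{d_i}(F_i)}$ contains $K_3$ via \cref{lem:sum_of_two}, so in either situation traversing the triangle yields the required closed walk. Hence it is enough to exhibit, inside every $R$ falling outside cases (1)–(3), a subproduct that already carries an induced $5$-cycle.

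The remaining configurations split into four mutually exhaustive (though overlapping) sub-cases, each of which has been pre-handled by a dedicated lemma in this section or in Section 3: (a) $s\geq 3$, which provides the subproduct $G_{R_1}\times G_{R_2}\times G_{R_3}$ with all $|R_i|\geq 3$, handled by the three-field lemma; (b) some matrix factor $M_{d_i}(F_i)$ is not $M_2(\F_2)$, handled by the Section 3 theorem, which already produces an induced $5$-cycle inside the single factor $G_{M_{d_i}(F_i)}$; (c) $r\geq 1$ and $s\geq 1$ with every matrix block equal to $M_2(\F_2)$, handled by the $M_2(\F_2)\times F$ lemma applied to the subproduct $G_{M_2(\F_2)}\times G_{R_1}$; (d) $r\geq 2$ and $s=0$ with every matrix block equal to $M_2(\F_2)$, handled by the $M_2(\F_2)\times M_2(\F_2)$ lemma. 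Any $R$ not of type (1), (2), or (3) contains one of (a)–(d) as a tensor subproduct, so the extension lemma yields an induced $5$-cycle in $G_R$ and destroys perfectness.

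The principal technical obstacle—actually producing the explicit induced $5$-cycles in the matrix and mixed settings—has already been overcome in the earlier lemmas (the matrix-ring sections, the mixed $M_2(\F_2)\times F$ construction, and the $M_2(\F_2)\times M_2(\F_2)$ construction), so what remains here is purely a bookkeeping argument to confirm that the four sub-cases above exhaust all rings outside (1)–(3). The verification that each of (a)–(d) is indeed present whenever the hypotheses fail is a routine inspection of the factors of $R^{\s}$, and the extension lemma then delivers the conclusion uniformly.
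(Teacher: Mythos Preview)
Your proposal is correct and follows essentially the same route as the paper: reduce to the semisimple case, verify the three ``if'' cases directly (bipartite, commutative classification, brute-force for $M_2(\F_2)$), and for the ``only if'' direction combine the extension lemma with the explicit $5$-cycles already constructed in Sections~3--4, organized into the same exhaustive case split. One minor remark: your parenthetical ``self-duality considerations for its complement'' in case~(3) is not needed and not what the paper does---perfectness of $G_{M_2(\F_2)}$ is simply taken from the computer verification recorded in Section~3.
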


\section*{Acknowledgements}
We thank Maria Chudnovsky, Michal Cizek, Logan Crew, Sophie Spirkl, and Torsten Sander for the enlightening discussions about unitary graphs and related topics. We are especially grateful to Sophie Spirkl for raising \cref{question:main}. We also thank Sunil Chebolu for introducing us to the article \cite{invertible_matrices}.

\bibliographystyle{amsplain}
\bibliography{references.bib}
\end{document}